\theoremstyle{plain}
\newtheorem{theorem}{Theorem}
\newtheorem{lemma}[theorem]{Lemma}
\newtheorem{proposition}[theorem]{Proposition}
\theoremstyle{definition}
\newcommand{\F}{\mathcal{F}}
\newcommand{\bxi}{{\boldsymbol{\xi}}}
\newcommand{\bM}{{\boldsymbol{M}}}
\newcommand{\bEta}{{\boldsymbol{\eta}}}
\let\oldmarginpar\marginpar
\renewcommand\marginpar[1]{\-\oldmarginpar[\raggedleft\footnotesize #1]%
{\raggedright\footnotesize #1}}
\begin{document}

\title[Martingales and Permutations]%
{A Systematic Martingale Construction with Applications to Permutation Inequalities}

\author[Pozdnyakov and Steele]%
{Vladimir Pozdnyakov and J. Michael Steele}

\thanks{Vladimir Pozdnyakov: Department of Statistics,
University of Connecticut, 215 Glenbrook Road, U-4120, Storrs, CT 06269-4120, Email: \texttt{vladimir.pozdnyakov@uconn.edu}}

\thanks{J. M. Steele: The Wharton School, Department of Statistics, Huntsman Hall
447, University of Pennsylvania, Philadelphia, PA 19104.
Email address: \texttt{steele@wharton.upenn.edu}}

\begin{abstract}
We illustrate a process that constructs martingales from raw material that arises naturally from the theory of
sampling without replacement.
The usefulness of the new martingales is illustrated by the development of maximal
inequalities for permuted sequences of real numbers.
Some of these inequalities are new and some are variations of classical inequalities like those introduced by A. Garsia in the study of
rearrangement of orthogonal series.

\medskip
\noindent{\sc Keywords}:  construction of martingales, permutation inequalities, Garsia inequality,
combinatorial martingales, discrete Brownian bridge.

\medskip
\noindent{\sc Mathematics Subject Classification (2000)}: 60G42, 60C05
\end{abstract}
\date{\today}


\maketitle

\section{Introduction: A Motivating Question}

Let ${\mathcal X}=\{x_1,x_2,...,x_n\}$ be a fixed set of real
numbers, and let  $X_1,X_2,...,X_n$ be the successive values of a sample of size
$n$ that is drawn sequentially without replacement from the set $\mathcal X$.
We are concerned here with a systematic process by which one can construct martingales with respect to the sequence of sigma-fields
$\sigma (X_1, X_2, ...,X_k)$, $1 \leq k \leq n$.
For example, we consider the partial sums
$$
S_k=X_1+X_2+ \cdots+X_k, \quad 1 \leq k \leq n,
$$
and ask: Is there an $\{\F_k: 1 \leq k \leq n\}$ martingale where the values $\{S_k^2: 1 \leq k \leq n\}$ appear
in an simple and explicit way? What about the values $\{S_k^3: 1 \leq k \leq n\}$, or the partial sums of $X_i^2$, etc.
We show that there is a practical, unified approach to these problems. It faces some limitations due to the burden of algebra,
but one can make considerable progress before those burdens become too cumbersome.

We first illustrate the construction with two basic examples. These lead in turn to several martingales whose usefulness we indicate by
the derivation of permutation inequalities ---
both old and new.

\section{First Example of the Construction}

To begin, we consider $T_k=X_1^2+X_2^2+ \cdots+X_k^2$ for $1\leq k\leq n$, and we ask for a martingale where $T_k$ (or a deterministic
multiple of $T_k$) appears as a summand. The set $\mathcal X$ is known before sampling beings, and
the only source of randomness is the sampling process itself. The totals are known, deterministic, values which we denote by
\begin{equation}
M=S_n \quad  \text{and} \quad B=T_n.
\end{equation}
We first compute the conditional expectation of $S^2_{k+1}$ given the immediate past,
\begin{align*}
    E[S_{k+1}^2|\F_k]&=E[(S_{k}+X_{k+1})^2|\F_k]\\
                     &=E[(S_{k}^2+2S_kX_{k+1}+X_{k+1}^2|\F_k]\\
                     &=S_k^2+2S_k\frac{M-S_k}{n-k}+\frac{B-T_k}{n-k}\\
                     &=\frac{n-k-2}{n-k}S_k^2+\frac{2M}{n-k}S_k-\frac{1}{n-k}T_k+\frac{B}{n-k}.
\end{align*}
To organize this information, introduce the vector-column $\bxi_k=(S_k^2,S_k,T_k,1)^\top$, and note
that we also have
\begin{align*}
    E[S_{k+1}|\F_k]&=S_k+\frac{M-S_k}{n-k}=\frac{n-k-1}{n-k}S_k+\frac{M}{n-k}, \quad \text{and }\\
    E[T_{k+1}|\F_k]&=T_k+\frac{B-T_k}{n-k}=\frac{n-k-1}{n-k}T_k+\frac{B}{n-k}.
\end{align*}
These observations can be combined into one matrix equation for the one-step conditional expected values
$$E[\bxi_{k+1}|\F_k]=A_{k+1}\bxi_k,$$
where here the deterministic $4 \times 4$ matrix $A_{k+1}$ is given explicitly by
{\small
$$A_{k+1}=\left(
      \begin{array}{cccc}
      \displaystyle  \frac{n-k-2}{n-k} & \displaystyle\frac{2M}{n-k} & \displaystyle-\frac{1}{n-k} & \displaystyle\frac{B}{n-k} \\ &&&\\
      \displaystyle 0 & \displaystyle\frac{n-k-1}{n-k} & 0 & \displaystyle\frac{M}{n-k} \\ &&&\\
      \displaystyle 0 & 0 & \displaystyle\frac{n-k-1}{n-k} & \displaystyle\frac{B}{n-k} \\ &&&\\
      \displaystyle 0 & 0 & 0 & 1 \\
      \end{array}
    \right).
$$}
The matrices $\{A_k: 1 \leq k \leq n-2\}$ are invertible and deterministic, so the vector process
\begin{equation}\label{eq:MartingaleRepr}
\bM_k=A_1^{-1}A_2^{-1}\cdots A_k^{-1}\bxi_k
\end{equation}
is well-defined and adapted to $\{\F_k: 1 \leq k \leq n-2 \}$. To check that
it is a vector martingale we only need to note that
\begin{align}
E(\bM_{k+1}|\F_k)=&E(A_1^{-1}A_2^{-1}\cdots A_{k+1}^{-1}\bxi_{k+1}|\F_k) \label{ew:GeneralProcedure} \\
                             =&A_1^{-1}A_2^{-1}\cdots A_{k+1}^{-1}E(\bxi_{k+1}|\F_k) \notag \\
                             =&A_1^{-1}A_2^{-1}\cdots A_{k+1}^{-1}A_{k+1}\bxi_k
                             =A_1^{-1}A_2^{-1}\cdots A_k^{-1}\bxi_k
                             =\bM_k. \notag
\end{align}
Now, to extract the benefit from the martingale $\{\bM_k\}$, one just needs to make it more explicit.

Here one is fortunate; an easy induction confirms that $A_1^{-1}A_2^{-1}\cdots A_k^{-1}$ is given by
the  upper-triangular matrix:
{\small
$$\frac{1}{n-k}\left(
      \begin{array}{cccc}
      \displaystyle  \frac{n(n-1)}{n-k-1} & \displaystyle-\frac{2knM}{n-k-1} & \displaystyle\frac{kn}{n-k-1}& \displaystyle\frac{k(k+1)M^2-knB}{n-k-1 }\\ &&&\\
      \displaystyle 0 & \displaystyle n & 0 & \displaystyle-kM \\ &&&\\
      \displaystyle 0 & 0 & \displaystyle n & \displaystyle-kB \\ &&&\\
      \displaystyle 0 & 0 & 0 & n-k \\
      \end{array}
    \right).
$$}
Now, the coordinates of the vector martingale $\bM_k= (M_{1,k},M_{2,k},M_{3,k},M_{4,k})^\top$ are martingales in their own
right, and it is worthwhile to examine them individually.

The fourth coordinate just gives the trivial martingale  $M_{4,k}\equiv 1$, but the other coordinates are much more interesting.
The second and third coordinates give us two useful --- but known --- martingales,
\begin{equation}\label{eq:MartingalsM2M3}
M_{2,k}=(nS_k-kM)/(n-k) \quad \text{and } M_{3,k}=(nT_k-kB)/(n-k).
\end{equation}
Actually, we have only one martingale here; one gets the martingale $\{M_{3,k}\}$ from
the martingale $\{M_{2,k}\}$ if one replaces ${\mathcal X}$  with the set of squares ${\mathcal X'}=\{x_1^2,x_2^2,...,x_n^2\}$.

The martingale $\{M_{2,k}\}$ is given in Serfling (1974) and Stout (1974, p.~147).
The earliest source we could identify for the martingale
$\{M_{2,k}\}$ is Garsia (1968,~p.~82), but it is hard to say if this was its first appearance.
The martingale $\{M_{2,k}\}$ is a natural one with few impediments to its discovery.

To get a martingale that is more novel (and less transparent), we only need to consider the
first coordinate of the vector martingale $\bM_k$. One can write this coordinate explicitly as
\begin{align}
   M_{1,k}=&\frac{n(n-1)}{(n-k)(n-k-1)}S_k^2-\frac{2knM}{(n-k)(n-k-1)}S_k \notag \\
          &+\frac{kn}{(n-k)(n-k-1)}T_k+\frac{k(k+1)M^2-knB}{(n-k)(n-k-1)}. \notag
\end{align}
This may seem complicated at first sight, but there is room for simplification.
In many situations it is natural to  assume that $M=S_n=0$,
and, in that case, $M_{1,k}$ reduces to the more manageable martingale sequence that we denote by
\begin{equation}\label{eq:FirstNewMartingale}
\widetilde{M}_k=\frac{1}{(n-k)(n-k-1)}\left[(n-1)S_k^2-(B-T_k)k\right], \quad \quad 1\leq k \leq n-2.
\end{equation}

We will give several applications of this martingale to permutation inequalities, and, in particular, we
use it in Section \ref{sec:QuadraticRearrangement} to get a new bi-quadratic maximal inequality for permuted arrangements.
We should also note that this martingale also has a potentially useful monotonicity property. Specifically,
$T_k$ is monotone increasing, so by a little surgery on $T_k$ (say by replacing $T_k$ by $k^\alpha T_k$)
will yield one a rich family of submartingales or supermartingales.

The device used here to get the 4-vector martingale $\{\bM_n: 1 \leq k \leq n-2 \}$ can be extended in several ways.
The most direct approach begins with $S_k^3$ in addition to $S_k^2$. In that case, linearization of the recursions
requires one to introduce the terms
$S_kT_k$ and $U_k=X_1^3+X_2^3+ \cdots+X_k^3$. The vectors $\{\bxi_k\}$ are then 7-dimensional, and,
the matrix algebra is still tractable through symbolic computation, but it is unpleasant to display.
One can follow the construction
and obtain seven martingales. Some of these are known, but many as four may be new.
The ones with $S_k^3$ and $S_kT_k$ are (almost) guaranteed to be new.

Nevertheless, we do not pursue the seven-dimensional example here. Instead, for our second example, we apply
the general construction to a simpler two-dimensional problem.
In this example, the algebra is much more attractive, and the martingale that one finds has
considerable flexibility.

\section{Second Example of the Construction}\label{sec:SecondConstruction}

As before, we assume that $X_1, X_2, ...,X_n$ is a sequential sample  taken
without replacement from $\mathcal X$, and we impose the
centering condition $S_n=M=0$. Further, we consider a system of
``multipliers" $a_k$, $1\leq k\leq n$ where each $a_k$ is assumed to be an $\mathcal{F}_{k-1}$ measurable
random variable.
Since our measure space is finite, these non-anticipating random variables are automatically bounded.

The basic building block for our next collection of martingales
is the sequence of random variables defined by setting
$$
W_k=a_1X_1+a_2X_2+\cdots+a_kX_k \quad \text{for }1\leq k\leq n.
$$
The immediate task is to find a martingale that has $W_k$ (or a deterministic multiple of $W_k$) as a summand.

As before, we begin by calculating the one-step conditional expectations:
$$
E[W_{k+1}|\mathcal{F}_k]=W_k-\frac{a_{k+1}}{n-k}S_k \quad \text{and } \quad E[S_{k+1}|\F_k]=\frac{n-k-1}{n-k}S_k.
$$
If we introduce the vector $\bEta_k=(W_k,S_k)^{\top}$ we have
$$
E[\bEta_{k+1}|\mathcal{F}_k]=A_{k+1}\bEta_k
\quad \text{where }
{
A_{k+1}=\left(
      \begin{array}{cc}
      \displaystyle  1 & \displaystyle -\frac{a_{k+1}}{n-k} \\ &\\
      \displaystyle 0 &\displaystyle \frac{n-k-1}{n-k}
  \end{array}
    \right)}.
$$
Inversion is now especially easy, and we note that for
$$A_{k}=\left(
      \begin{array}{cc}
      \displaystyle  1 & \displaystyle -\frac{a_{k}}{n-k+1} \\ &\\
      \displaystyle 0 & \displaystyle \frac{n-k}{n-k+1}
  \end{array}
    \right)
\quad \text{we have } \quad
A_{k}^{-1}=\left(
      \begin{array}{cc}
      \displaystyle  1 & \displaystyle \frac{a_{k}}{n-k} \\ &\\
      \displaystyle 0 & \displaystyle \frac{n-k+1}{n-k}
  \end{array}
    \right),
$$
so induction again confirms the critical inverse:
$$A_1^{-1}A_2^{-1}\cdots A_{k}^{-1}=\left(
      \begin{array}{cc}
      \displaystyle  1 & \displaystyle \frac{a_1+a_2+\cdots+a_{k}}{n-k} \\ &\\
      \displaystyle 0 & \displaystyle \frac{n}{n-k}
  \end{array}
    \right).$$
The general recipe \eqref{ew:GeneralProcedure} then gives us a new martingale:
\begin{equation}\label{eq:WeightedMartingale}
M_k= W_k+\frac{a_1+a_2+ \cdots +a_k}{n-k}S_k \quad \quad 1\leq k < n
\end{equation}
Once this is martingale is written down, one could also verify the martingale property by a direct calculation.
In this instance, linear algebra has served us mainly as a tool for discovery.

Some of the benefits of the martingale \eqref{eq:WeightedMartingale} are brought to life through interesting choices for the
non-anticipating factors $\{a_k: 1\leq k \leq n\}$.
For, example, if we take  $a_1=0$ and set $a_k=X_{k-1}$ for $k\geq 2$, we find a curious quadratic martingale
$$
M_k= X_1X_2+X_2X_3+\cdots+X_{k-1}X_k+\frac{(X_1+\cdots+X_{k-1})(X_1+\cdots+X_k)}{n-k}.
$$
One is unlikely to have hit upon this martingale without the benefit of a systematic construction.
For the moment, no application of this martingale comes to mind, but it does seem useful to know that
there is such a simple quadratic martingale. Perhaps a nice application is not too far away.

Out main purpose here is to expose the general construction \eqref{ew:GeneralProcedure}, but, through the illustrations
just given, we now have several martingales that speak directly to permutation inequalities --- an extensive subject
where martingales have traditionally been part of the toolkit. As we explore
what can be done with the new martingales \eqref{eq:FirstNewMartingale} and \eqref{eq:WeightedMartingale},
we will also address some of the classic results on permutation inequalities.

\section{A Permutation Inequality}\label{sec:firstRAI}

In the first application, we quickly check
what can be done with the martingale $M_{2,k}=(nS_k-kM)/(n-k)$ given by \eqref{eq:MartingalsM2M3} in our first construction.
To keep the formulas simple, we impose a standing assumption,
\begin{equation}\label{M=0}S_n=M=0,\end{equation}
so,
$
{S_k}/({n-k})
$
is an $\{\F_k \}$ martingale with expectation zero.
The Doob-Kolmogorov $L^2$ maximal inequality (see e.g.~Shiryaev (1995, p.~493)) then gives us
$$E\max_{1\leq k\leq n-1}\left|({n-k})^{-1}{S_k}\right|^2\leq 4 ES_{n-1}^2.$$
This can be simplified by noting that
$$
ES_{n-1}^2=E(M-X_n)^2=EX_n^2=EX_1^2={B}/{n},
$$
so, in the end, we have
\begin{equation}\label{easy*Garsia}
E\max_{1\leq k\leq n-1}\left|({n-k})^{-1}{S_k}\right|^2\leq 4B/{n}.
\end{equation}
One could immediately transcribe this as a permutation inequality, but first we put it in a form that seems more natural
for applications.

For any fixed permutation $\sigma$, the distribution of the vector $(X_1, X_2,\dots, X_n)$ is
the same as distribution the vector $(X_{\sigma(1)}, X_{\sigma(2)}, \ldots ,X_{\sigma(n)})$ (cf.~Feller (1971, p.~228)),
so, in particular the distribution of $(X_1, X_2,\dots, X_n)$ is the same as the distribution of $(X_n, X_{n-1},\dots, X_1)$.
By the centering assumption \eqref{M=0}, we know
$$
(X_1+X_2+\cdots+X_k)^2=(X_{k+1}+X_{k+2}+\cdots +X_n)^2,
$$
so applying both observations gives us the identity
$$E\max_{1\leq k\leq n-1}\left|({n-k})^{-1}{S_k}\right|^2=E\max_{1\leq k\leq n-1}\left|{S_k}/{k}\right|^2.$$
Using this bound in \eqref{easy*Garsia} then gives us
\begin{equation}\label{eq:M2Identity}
E\max_{1\leq k\leq n-1}\left|{S_k}/{k}\right|^2\leq {4B}/{n},
\end{equation}
so by the sampling model and the definition of $B$, we come to an attractive inequality
for the maximum of averages drawn sequentially from a randomly permutated sample.

\begin{proposition}[Max-Averages Inequality]\label{Serfling*Martingale}
For real numbers $\{x_1,x_2,...,x_n\}$ with $x_1+x_2+\cdots+x_n=0$, we have
\begin{equation}\label{eq:SerflingInequality}
\frac{1}{n!} \sum_{\sigma}\max_{1\leq k\leq n}\left\{ \frac{1}{k} {\sum_{i=1}^kx_{\sigma(i)}}\right\}^2
\leq \frac{4}{n}\sum_{i=1}^nx_i^2.
\end{equation}
\end{proposition}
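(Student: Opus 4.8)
The plan is to recognize that the left-hand side of \eqref{eq:SerflingInequality} is nothing but the expectation $E\max_{1\leq k\leq n}\{S_k/k\}^2$ under the sampling-without-replacement model, so the proposition is just a restatement of the analytic inequality \eqref{eq:M2Identity} that has already been assembled in the text. Concretely, I would first note that for a uniformly random permutation $\sigma$ the vector $(X_{\sigma(1)},\dots,X_{\sigma(n)})$ has the same law as the sequential-without-replacement sample $(X_1,\dots,X_n)$, so averaging over $\sigma$ and dividing by $n!$ is exactly computing the expectation under the sampling model. Hence
$$
\frac{1}{n!}\sum_{\sigma}\max_{1\leq k\leq n}\left\{\frac{1}{k}\sum_{i=1}^k x_{\sigma(i)}\right\}^2
= E\max_{1\leq k\leq n}\left|\frac{S_k}{k}\right|^2,
$$
and the centering hypothesis $x_1+\cdots+x_n=0$ is precisely the standing assumption \eqref{M=0}.

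Next I would address the cosmetic discrepancy that \eqref{eq:M2Identity} has the max running over $1\leq k\leq n-1$ while the proposition's max runs over $1\leq k\leq n$. This is harmless: at $k=n$ we have $S_n/n = M/n = 0$ by the centering assumption, so the term $\{S_n/n\}^2=0$ cannot increase the maximum (the $k=1$ term $x_{\sigma(1)}^2\geq 0$ already dominates it, and in any case a zero term never raises a maximum of nonnegative quantities). Therefore $\max_{1\leq k\leq n}\{S_k/k\}^2 = \max_{1\leq k\leq n-1}\{S_k/k\}^2$ almost surely, and the two expectations coincide.

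Finally I would simply invoke \eqref{eq:M2Identity}, namely $E\max_{1\leq k\leq n-1}|S_k/k|^2 \leq 4B/n$, together with the definition $B = T_n = X_1^2+\cdots+X_n^2 = x_1^2+\cdots+x_n^2$ (a deterministic quantity), to conclude
$$
\frac{1}{n!}\sum_{\sigma}\max_{1\leq k\leq n}\left\{\frac{1}{k}\sum_{i=1}^k x_{\sigma(i)}\right\}^2
\leq \frac{4}{n}\sum_{i=1}^n x_i^2.
$$
Since every analytic ingredient — the martingale property of $S_k/(n-k)$, the Doob--Kolmogorov $L^2$ maximal inequality, the reversal identity $E\max_k|S_k/(n-k)|^2 = E\max_k|S_k/k|^2$, and the evaluation $ES_{n-1}^2 = B/n$ — has been established in Section \ref{sec:firstRAI}, there is essentially no obstacle remaining. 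The only point requiring a word of care is the translation step itself: making explicit that summing over all permutations and normalizing by $n!$ is the expectation under the without-replacement model, and checking the trivial boundary term at $k=n$; everything else is quotation.
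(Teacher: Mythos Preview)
Your proposal is correct and matches the paper's approach exactly: the paper derives \eqref{eq:M2Identity} via the martingale $S_k/(n-k)$, Doob--Kolmogorov, the computation $ES_{n-1}^2=B/n$, and the reversal identity, and then simply translates it into permutation language. Your explicit handling of the $k=n$ boundary term is a small clarification the paper leaves implicit, but otherwise this is precisely the intended argument.
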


This bound is so natural, it is likely to be part of the folklore of permutation inequalities, but we have been unable to locate it in
earlier work. Still, even if the inequality is known, it seems probable that it has been under appreciated. In two examples in
Section \ref{sec:AlternaitingSums} we find that it provides an efficient alternative to other, more complicated tools.

This inequality also bears a family relationship to an important inequality that originated with Hardy (1920).
Hardy's inequality went through some evolution before it reached its modern form (see Steele (2004), pp.~169 and 290 for historical
comments), but now one may write Hardy's inequality in a definitive way that underscores the analogy with \eqref{eq:SerflingInequality}:
\begin{equation}\label{eq:Hardy}
\max_{\sigma} \sum_{k=1}^n \left\{ \frac{1}{k} {\sum_{i=1}^kx_{\sigma(i)}}\right\}^2
\leq 4 \sum_{i=1}^n x_i^2.
\end{equation}
It is well known (and easy to prove) that the constant in Hardy's inequality is best possible, and it is feasible  that
the constant in \eqref{eq:SerflingInequality} is also best possible. We have not been able to
resolve this question.

\section{Exchangeability and a Folding Device}\label{sec:foldingdevice}

One also has the possibility of using exchangeability more forcefully;
in particular, one can exploit exchangeability around the center of the sample.
For a preliminary illustration of this possibility, we fix $1\leq m < n$ and note that
the martingale property \eqref{eq:FirstNewMartingale} of $\widetilde{M}_k$ gives us
\begin{align*}
ES_m^2=&E\frac{m}{n-1}[B-T_m] =\frac{m}{n-1}[B-ET_m].
\end{align*}
The martingale property \eqref{eq:MartingalsM2M3} of $M_{3,k}$ also gives us
$ET_m=({m}/{n})B$, so we have
\begin{equation}\label{S_2*Expectation}
ES_m^2=\frac{m(n-m)}{n(n-1)}B,
\end{equation}
a fact that one can also get by bare hands, though perhaps not so transparently.

We can also use the martingale
$M_{2,k}=(nS_k-kM)/(n-k)= nS_k/(n-k)$ here. Since we have $M=0$, the $L^2$ maximal inequality for this martingale
and the identity \eqref{S_2*Expectation} give us
\begin{equation}\label{eq:preFold}
E\max_{1\leq k\leq m}\left|\frac{S_k}{n-k}\right|^2\leq \frac{4 ES_{m}^2}{(n-m)^2} =  \frac{4 m B}{n(n-1)(n-m)}.
\end{equation}
Since $(n-k)^{2} \leq n^{-1}(n-1)$ for $1\leq k\leq m$, the bound \eqref{eq:preFold} implies the weaker, but simpler, bound
\begin{equation}\label{eq:FirstHalfFold}
E\max_{1\leq k\leq m}S_k^2\leq \frac{4mB}{n-m} \quad \quad \text{for } 1 \leq m < n.
\end{equation}
The idea now is to use symmetry to exploit the fact that \eqref{eq:FirstHalfFold} holds for many choices of $m$.
To begin, we note that we always have the crude bound
\begin{equation}\label{eq:crude}
E\max_{1\leq k\leq n}S_k^2\leq E\max_{1\leq k\leq m}S_k^2+E\max_{m< k\leq n}S_k^2.
\end{equation}
Typically this is useless, but here it points to a useful observation; we can use the same bounds on the second terms that we used on the first.
This is the ``folding device" of the section heading.

Specifically, since $M=0$ we have
$$
|S_k|=|S_n-S_k|=|X_n+X_{n-1}+\cdots+X_{k+1}|,
$$
so, by exchangeability, we see that the second summand of \eqref{eq:crude} is bounded by $4(n-m)B/m$. Thus, we have for
all $1\leq m < n$ that
\begin{equation}
E\max_{1\leq k\leq n}S_k^2\leq E\max_{1\leq k\leq m}S_k^2+E\max_{m< k\leq n}S_k^2\leq 4B \left(\frac{m}{n-m}+\frac{n-m}{m}\right).
\end{equation}
Here,  if we just consider  $n\geq 8$ and choose $m$ as close as possible to $n/2$, then
we see that the worst case occurs when $n=9$ and $m=4$; so we have the bound
\begin{equation}\label{backward*Serfling}
E\max_{1\leq k\leq n}S_k^2\leq E\max_{1\leq k\leq m}S_k^2+E\max_{m< k\leq n}S_k^2\leq (41/5)B \quad \text{for } n \geq 8.
\end{equation}

On the other hand, Cauchy's inequality gives us $S_k^2\leq kB \leq nB$, so the bound
\eqref{backward*Serfling} is trivially true for $n\leq 7$.
Combining these ranges gives us a centralized version of an inequality originating with A. Garsia (cf. Stout (1974, pp. 145--148)).

\begin{proposition}[A. Garsia] \label{Garsia*Inequality}
For any set of real numbers $\{x_1,x_2,...,x_n\}$ with sum $x_1+x_2+\cdots+x_n=0$ we have
\begin{equation}\label{eq:GarsiaProp}
\frac{1}{n!}\sum_{\sigma}\max_{1\leq k\leq n}|\sum_{i=1}^kx_{\sigma(i)}|^2\leq (8+\frac{1}{5})\sum_{i=1}^nx_i^2,
\end{equation}
where the sum is taken over all possible permutations of $\{x_1,x_2,...,x_n\}$.
\end{proposition}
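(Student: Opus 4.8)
The plan is to assemble \eqref{eq:GarsiaProp} from ingredients already established: the ``discrete Brownian bridge'' martingale $M_{2,k}=nS_k/(n-k)$ of \eqref{eq:MartingalsM2M3}, Doob's $L^{2}$ maximal inequality, the second-moment identity \eqref{S_2*Expectation}, and the folding argument of Section~\ref{sec:foldingdevice}; the only further observation needed is the dictionary between the sampling-without-replacement model and the uniform average over permutations. Since $(X_1,\dots,X_n)$ is a uniformly random arrangement of $\{x_1,\dots,x_n\}$, one has
$$
\frac{1}{n!}\sum_\sigma \max_{1\le k\le n}\Big|\sum_{i=1}^k x_{\sigma(i)}\Big|^2 \;=\; E\max_{1\le k\le n} S_k^2 ,
$$
so it suffices to show the right-hand side is at most $(41/5)B$, where $B=\sum_i x_i^2$.

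First I would handle $n\ge 8$ exactly as in the run-up \eqref{eq:FirstHalfFold}--\eqref{backward*Serfling}. Fix $1\le m<n$. The half-range bound \eqref{eq:FirstHalfFold} already gives $E\max_{1\le k\le m}S_k^2\le 4mB/(n-m)$. Because $S_n=0$, we have $|S_k|=|X_{k+1}+\cdots+X_n|$, and reading the sample in reverse order the second summand in the crude splitting \eqref{eq:crude} is equidistributed with a half-range maximum run from the opposite end; applying \eqref{eq:FirstHalfFold} with the roles of $m$ and $n-m$ interchanged bounds it by $4(n-m)B/m$. Adding the two halves yields $E\max_{1\le k\le n}S_k^2\le 4B\big(\tfrac{m}{n-m}+\tfrac{n-m}{m}\big)$. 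I would then minimize $g(m)=\tfrac{m}{n-m}+\tfrac{n-m}{m}$ over integers $1\le m<n$: the choice $m=\lfloor n/2\rfloor$ gives $g=2$ for even $n$, while the worst odd case is $n=9$, $m=4$, where $g(4)=\tfrac45+\tfrac54=\tfrac{41}{20}$; hence $E\max_{1\le k\le n}S_k^2\le(41/5)B$ whenever $n\ge 8$, which is \eqref{backward*Serfling}. For $n\le 7$, Cauchy--Schwarz applied termwise gives $\big(\sum_{i=1}^k x_{\sigma(i)}\big)^2\le k\sum_{i=1}^k x_{\sigma(i)}^2\le nB\le 7B\le(41/5)B$ for every $\sigma$, so the bound is automatic on that range. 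Rewriting $E\max_k S_k^2$ as the permutation average then gives \eqref{eq:GarsiaProp}.

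Every computational step here (Doob's inequality, the algebra behind \eqref{eq:FirstHalfFold}, the evaluation of $ES_m^2$) is already carried out in the excerpt, so the only points that deserve care are two ``bookkeeping'' ones: making the reversal precise in the folding step --- one must check that a partial sum $X_{k+1}+\cdots+X_n$ of the last $n-k$ sample values is equidistributed with $S_{n-k}$, jointly in $k$, so that \eqref{eq:FirstHalfFold} can legitimately be re-applied with $m$ and $n-m$ swapped --- and the small discrete optimization, namely verifying that $n=9$, $m=4$ really is the extremal case for $n\ge 8$ and that $7\le 41/5$ disposes of $n\le 7$. Neither is a genuine obstacle: once the martingale and the exchangeability symmetry around the center of the sample are on the table, the inequality follows by routine estimation, and the constant $8+\tfrac15$ is simply what this particular split produces.
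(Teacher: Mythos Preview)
Your proposal is correct and follows the paper's own argument essentially line for line: the half-range bound \eqref{eq:FirstHalfFold}, the reversal/exchangeability step to handle $\max_{m<k\le n}S_k^2$, the sum $4B\big(\tfrac{m}{n-m}+\tfrac{n-m}{m}\big)$, the integer optimization with worst case $n=9$, $m=4$, and the Cauchy--Schwarz cleanup for $n\le 7$ are exactly the ingredients the paper uses. The two ``bookkeeping'' points you flag (joint equidistribution under reversal and the discrete minimization) are precisely the ones the paper leaves to the reader, and your treatment of them is accurate.
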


Here we have paid some attention to the constant in this inequality, but already from the crude bound
\eqref{eq:crude} one knows that the present approach is not suited to the derivation of a best possible result.
Our intention has
been simply to illustrate what one can do with reasonable care and robust approach that uses the tools provided
by our general martingale construction.

Nevertheless, the constant in this inequality has an interesting history. The bound
seems first to have appeared in  Garsia (1968, Theorem 3.2) with
the constant $9$. Curiously, the inequality appears later in Garsia (1970, eqn. 3.7.15, p.~91) where the constant is given as $16$,
and for the proof of the inequality one is advised to ``following the same steps" of Garsia (1964). In each instance, the intended
applications did not call for sharp constants, so these variations are scientifically inconsequential. Still, they do make one curious.

Currently the best value for the constant in the Garsia inequality \eqref{eq:GarsiaProp} is due to Chobanyan (1994, Corollary 3.3)
where the stunning value of $2$ is obtained. Moreover, Chobanyan and Salehi (2001, Corollary 2.8)) have a much
more general inequality which also gives a constant of $2$ when specialized to our situation.

Here we should also note that in all
inequalities of this type have both a centralized version where $M=0$ and
non-centralized version where $M$ is unconstrained. One can pass easily between the versions (see e.g.
Stout (1974, p.~147) or Garsia (1970, p.~93)). The centralized versions are inevitably simpler to state, so we have
omitted discussions of the non-centralized versions.

\section{Quadratic Permutation Inequality}\label{sec:QuadraticRearrangement}

To apply the $L^2$ maximal inequality to the martingale $\{ \widetilde{M}_k : 1 \leq k \leq n-2 \}$
that was discovered by  our first construction \eqref{eq:FirstNewMartingale}, one needs a comfortable estimate of the
bounding term $4 E[\widetilde{M}_{n-2}^2]$ in the Doob-Kolmogorov inequality. There are
classical formulas for the moments for sampling without replacement that simplify this task. These
formulas were known to Isserlis (1931), if not before, but they are perhaps easiest to derive on one's own.

\begin{lemma}\label{Moments}
If $X_1,X_2,X_3,X_4$ are draw without replacement from ${\mathcal X}=\{x_1,x_2,...,x_n\}$  where
$x_1+x_2+ \cdots+x_n=0$, then we have the moments
\begin{align*}
&E(X_1X_2X_3X_4)=\frac{3B^2-6Q}{n(n-1)(n-2)(n-3)},\quad E(X_1^2X_2X_3)=\frac{2Q-B^2}{n(n-1)(n-2)},\\
&E(X_1^2X_2^2)=\frac{B^2-Q}{n(n-1)},\quad
E(X_1^3X_2)=-\frac{Q}{n(n-1)}, \quad \text{and} \quad E(X_1^4)={Q}/{n},
\end{align*}
where, as usual, we set $B=x_1^2+x_2^2+\cdots+x_n^2$ and $Q=x_1^4+x_2^4+ \cdots+x_n^4$.
\end{lemma}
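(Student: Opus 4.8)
The plan is to compute each of the five moments by exploiting the symmetry of sampling without replacement together with the single constraint $x_1+x_2+\cdots+x_n=0$. Every joint moment $E(X_{i_1}^{a_1}\cdots X_{i_r}^{a_r})$ depends only on the multiset of exponents, and equals the average of the corresponding monomial over all ordered choices of distinct indices; so the whole task reduces to evaluating sums like $\sum' x_{i_1}^{a_1}\cdots x_{i_r}^{a_r}$ over distinct indices and dividing by the falling factorial $n(n-1)\cdots(n-r+1)$.

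First I would record the power-sum notation $B=\sum_i x_i^2$, $Q=\sum_i x_i^4$, and observe that the hypothesis $\sum_i x_i=0$ kills the first power sum, while by the same reasoning $\sum_i x_i^3$ does \emph{not} vanish in general — but it will be eliminated along the way because the only place it could enter is paired against the empty ``remaining'' index sum, which is what the centering controls. Concretely, $E(X_1^4)=\frac1n\sum_i x_i^4=Q/n$ is immediate. For $E(X_1^3X_2)$ I would write $\sum_{i\neq j} x_i^3 x_j = \sum_i x_i^3\big(\sum_j x_j - x_i\big) = \big(\sum_i x_i^3\big)\cdot 0 - \sum_i x_i^4 = -Q$, hence $E(X_1^3X_2)=-Q/(n(n-1))$. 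For $E(X_1^2X_2^2)$ I would use $\big(\sum_i x_i^2\big)^2 = \sum_i x_i^4 + \sum_{i\neq j} x_i^2 x_j^2$, giving $\sum_{i\neq j} x_i^2 x_j^2 = B^2 - Q$ and therefore $E(X_1^2X_2^2)=(B^2-Q)/(n(n-1))$.

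The two remaining moments involve three and four distinct indices and are handled the same way, peeling off one index at a time using $\sum_j x_j = 0$. For $E(X_1^2X_2X_3)$: $\sum_{i,j,k\text{ distinct}} x_i^2 x_j x_k = \sum_{i\neq j} x_i^2 x_j\big(\sum_k x_k - x_i - x_j\big)$; the first term dies by centering, the $-x_i$ term gives $-\sum_{i\neq j}x_i^3 x_j = Q$ (from the previous computation), and the $-x_j$ term gives $-\sum_{i\neq j} x_i^2 x_j^2 = -(B^2-Q)$, so the total is $2Q-B^2$ and $E(X_1^2X_2X_3)=(2Q-B^2)/(n(n-1)(n-2))$. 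For $E(X_1X_2X_3X_4)$ one expands $\big(\sum_i x_i^2\big)^2=B^2$ in a different grouping, or more systematically uses the identity expressing the fourth elementary-type symmetric sum over distinct indices in terms of power sums: since all odd power sums beyond the first are the only survivors and $p_1=0$, one gets $\sum_{\text{distinct}} x_ix_jx_kx_\ell = 3B^2 - 6Q$ after collecting the partition-of-$4$ contributions (the $3$ comes from the three pairings of four indices into two pairs, the $6Q$ from the inclusion–exclusion corrections), yielding the stated value.

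The main obstacle is purely bookkeeping: getting the inclusion–exclusion coefficients right when converting from ``sum over distinct indices'' to symmetric functions of the full power sums, especially for the four-index moment where several set-partition types collide. I would guard against sign and multiplicity errors either by checking the identities against the general Newton-type formula $\sum_{\text{distinct } i_1,\dots,i_r} x_{i_1}\cdots x_{i_r} = \sum_{\lambda\vdash r} (-1)^{r-\ell(\lambda)} c_\lambda \prod p_{\lambda_j}$ and then setting $p_1=p_3=0$ is \emph{not} valid ($p_3\neq0$), so instead I would simply verify each formula on a tiny explicit example, e.g.\ $n=4$ with $\{1,1,-1,-1\}$ or $\{3,-1,-1,-1\}$, where $B$, $Q$, and all the moments can be computed by hand and matched. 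Once the five algebraic identities are confirmed, dividing by the appropriate falling factorials finishes the proof.
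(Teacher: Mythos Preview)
Your approach is correct. The paper does not actually prove this lemma: it simply states the five formulas, attributes them to Isserlis (1931), and remarks that they are ``perhaps easiest to derive on one's own.'' Your index-peeling method, writing each sum over distinct indices and using $\sum_j x_j=0$ to eliminate one index at a time, is exactly the kind of direct computation the authors have in mind.

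One small comment: the four-index moment is the only place where you abandon the method and start gesturing at partition formulas and numerical checks. There is no need; the same peeling finishes it in one line. Using your own result for the three-index sum,
\[
\sum_{\substack{i,j,k,\ell\\ \text{distinct}}} x_i x_j x_k x_\ell
=\sum_{\substack{i,j,k\\ \text{distinct}}} x_i x_j x_k\Bigl(\sum_\ell x_\ell - x_i - x_j - x_k\Bigr)
= -3\!\!\sum_{\substack{i,j,k\\ \text{distinct}}}\! x_i^2 x_j x_k
= -3(2Q-B^2)=3B^2-6Q.
\]
The third power sum $p_3=\sum_i x_i^3$ never enters at all, because every time you strip off an index the factor $\sum_\ell x_\ell=0$ kills the leading term outright; so your worry about $p_3\neq 0$ is moot for these particular moments.
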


Now, to calculate $4 E[\widetilde{M}_{n-2}^2]$, we first note that
$$
S_{n-1}=-X_n \quad \text{and} \quad S_{n-2}=-X_{n-1}-X_n,
$$
 so just expanding the
definition of $\widetilde{M}_{n-2}$ gives us
\begin{align*}
4 E[\widetilde{M}_{n-2}^2] = & E\left[\left\{(n-1)S_{n-2}^2-(n-2)(B-T_{n-2})\right\}^2\right]\\
 = & E\left[\left\{(n-1)(X_{n-1}+X_n)^2-(n-2)(X_{n-1}^2+X_n^2)\right\}^2\right]\\
 = & E\big[X_{n-1}^4+X_n^4+(4n^2-8n+6)X_{n-1}^2X_n^2\\
 &\quad +4(n-1)X_{n-1}^3X_n+4(n-1)X_{n-1}X_n^3\big].
 \end{align*}
By Lemma \ref{Moments} and exchangeability, one then finds after some algebra that
the $L^2$ maximal inequality takes the form
\begin{equation}\label{eq:MaxEq4us}
E\max_{1\leq k\leq n-2}\widetilde{M}_k^2  \leq 4 E[\widetilde{M}_{n-2}^2]=\frac{4n^2-8n+6}{n(n-1)}B^2-\frac{4n^2-2n}{n(n-1)}Q.
\end{equation}
The only task left is to reframe this inequality so that it easy to apply as a permutation inequality.
Here it is useful to observe that
\begin{equation}\label{eq:nbounds}
\frac{4n^2-8n+6}{n(n-1)}<4 \quad \text{and} \quad \frac{4n^2-2n}{n(n-1)}>4 \quad \quad \text{for } n\geq 2,
\end{equation}
moreover, these are not wasteful bounds; they are essentially sharp for large $n$. When we apply these bounds
in \eqref{eq:MaxEq4us} we get the nicer bound,

\begin{equation}\label{our*forward}
E\max_{1\leq k\leq n-2}\left|\frac{(n-1)S_k^2-k(B-T_k)}{(n-k)(n-k-1)}\right|^2 \leq 4[B^2-Q].
\end{equation}

This brings us closer to our goal, but further simplification is possible if we note that by reverse sampling the left side of
\eqref{our*forward} can also be written as
\begin{align*}E\max_{1\leq k\leq n-2}\left|\frac{(n-1)S_k^2-k(B-T_k)}{(n-k)(n-k-1)}\right|^2
=&E\max_{2\leq k\leq n-1}\left|\frac{(n-1)S_k^2-(n-k)T_k}{k(k-1)}\right|^2\\
=&E\max_{2\leq k\leq n}\left|\frac{(n-1)S_k^2-(n-k)T_k}{k(k-1)}\right|^2.
\end{align*}
In permutation terms, the bound \eqref{our*forward} establishes the following proposition.

\begin{proposition}[Quadratic Permutation Inequality]\label{our*martingale}
For any set of real numbers $\{x_1,x_2,...,x_n\}$ with $x_1+x_2+\cdots+x_n=0$ we have
\begin{equation*}
\frac{1}{n!}\sum_{\sigma}\! \max_{2\leq k\leq n}\!\left|\frac{\left(\sum_{i=1}^kx_{\sigma(i)}\right)^2
\!\!-\frac{n-k}{n-1}\sum_{i=1}^kx_{\sigma(i)}^2}{k(k-1)}\right|^2
\!\leq \frac{4}{(n-1)^2} \! \left\{\left(\sum_{i=1}^nx_i^2\right)^2-\sum_{i=1}^n x_i^4 \right\}\!,
\end{equation*}
where the first sum is taken over all possible permutations of $\{x_1,x_2,...,x_n\}$.
\end{proposition}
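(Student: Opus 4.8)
The plan is to run the Doob--Kolmogorov $L^2$ maximal inequality on the martingale $\{\widetilde{M}_k : 1\le k\le n-2\}$ from \eqref{eq:FirstNewMartingale} and then translate the resulting bound into permutation language. First I would apply the maximal inequality to get $E\max_{1\le k\le n-2}\widetilde{M}_k^2 \le 4\,E[\widetilde{M}_{n-2}^2]$ and evaluate the right-hand side explicitly. Using $M=0$ we have $S_{n-1}=-X_n$ and $S_{n-2}=-X_{n-1}-X_n$, so expanding the definition of $\widetilde{M}_{n-2}$ writes $4\,E[\widetilde{M}_{n-2}^2]$ as a linear combination of $E[X_{n-1}^4]$, $E[X_{n-1}^2X_n^2]$, and $E[X_{n-1}^3X_n]$. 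Substituting the sampling-without-replacement moments of Lemma~\ref{Moments} and collecting terms gives the closed form $\frac{4n^2-8n+6}{n(n-1)}B^2-\frac{4n^2-2n}{n(n-1)}Q$, which is precisely \eqref{eq:MaxEq4us}.

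Next I would clean up the constants using the two elementary estimates in \eqref{eq:nbounds}: the coefficient of $B^2$ is at most $4$, and since the coefficient of $Q$ enters with a minus sign, enlarging it to $4$ only weakens the inequality. This yields the uniform bound $E\max_{1\le k\le n-2}\widetilde{M}_k^2 \le 4(B^2-Q)$, i.e. \eqref{our*forward}.

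The remaining work is to convert the backward-indexed maximum into a forward-indexed one. Under the reversal $(X_1,\dots,X_n)\mapsto(X_n,\dots,X_1)$, exchangeability together with $M=0$ replaces $S_k$ by a quantity with the same square as $S_{n-k}$ and replaces $B-T_k$ by $T_{n-k}$, while the index substitution $j=n-k$ turns $(n-k)(n-k-1)$ into $k(k-1)$; hence $\widetilde{M}_k$ becomes $\frac{(n-1)S_j^2-(n-j)T_j}{j(j-1)}$ with $j$ ranging over $2\le j\le n-1$. One can then append $j=n$ at no cost, since $S_n=0$ makes that term vanish and the maximum is over nonnegative quantities. Factoring $(n-1)$ out of the numerator, dividing through by $(n-1)^2$, and reading $E$ as the average over all $n!$ permutations with $S_j, T_j, B, Q$ written as $\sum_{i=1}^j x_{\sigma(i)}$, $\sum_{i=1}^j x_{\sigma(i)}^2$, $\sum_i x_i^2$, $\sum_i x_i^4$ produces exactly the stated inequality.

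The main obstacle is the moment bookkeeping in the first step: one must get the coefficient $4n^2-8n+6$ of $X_{n-1}^2X_n^2$ right, combine the cross terms $X_{n-1}^3X_n$ correctly with Lemma~\ref{Moments} (several contributing moments are negative), and check the direction of \eqref{eq:nbounds} against the sign of the $Q$-term. The reverse-sampling identity is routine once the shift $j=n-k$ is tracked carefully, but a factor is easy to misplace, so I would verify it on a small case such as $n=4$.
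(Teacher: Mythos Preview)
Your proposal is correct and follows the paper's own argument essentially step for step: apply Doob--Kolmogorov to $\{\widetilde{M}_k\}$, expand $\widetilde{M}_{n-2}$ via $S_{n-2}=-X_{n-1}-X_n$ and $B-T_{n-2}=X_{n-1}^2+X_n^2$, evaluate with Lemma~\ref{Moments} to reach \eqref{eq:MaxEq4us}, simplify using \eqref{eq:nbounds} to get \eqref{our*forward}, and then pass to the forward index by reverse sampling with $j=n-k$ and append $j=n$ for free since $S_n=0$. The only blemish is the phrase ``turns $(n-k)(n-k-1)$ into $k(k-1)$'': under $j=n-k$ it becomes $j(j-1)$, which you do write correctly in the displayed formula that follows.
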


At first glance, this may seem complicated, but the components are all readily interpretable and the inequality
is no more complicated than it has to be. The main observation is that our  general construction \eqref{eq:MartingaleRepr}
brought us here
in a completely straightforward way. Without such an on-ramp, one is unlikely to imagine
any bound of this kind --- and \emph{relative} simplicity. Moreover, the inequality does have
intuitive content, and this content is made more explicit in the next section.

\section{The Discrete Bridge and Further Folding}\label{sec:DiscreteBridge}

Here we set $x_i=1$ for $1 \leq i \leq m$ and take $x_i=-1$ for $m+1 \leq i \leq 2m$. We then
let $X_i$, $1\leq i \leq  2m$, denote samples that are
drawn without replacement from the set ${\mathcal X}=\{x_1,x_2, \ldots, x_{2m}\}$.
If we put $S_0=0$ and denote the usual partial sums by $S_k$, $1\leq k \leq 2m$, then $S_{2m}=0$ and
the process $\{S_k: 0 \leq k \leq 2m\}$ is a discrete analog of the Brownian bridge.
Alternatively, one can view this process as simple random walk that is conditioned to return to $0$ at time $2m$.

For $\{\widetilde{M}_k\}$, the martingale from the first construction \eqref{eq:FirstNewMartingale}, we now have
$T_k\equiv k$, so we have the simple representation
\begin{equation}\label{eq:DBMart}
\widetilde{M}_k=\frac{(2m-1)S_k^2-(2m-k)k}{(2m-k)(2m-k-1)}, \quad \quad 1\leq k\leq 2m-2.
\end{equation}
By Lemma~\ref{Moments} with $B=Q=2m$ and by \eqref{S_2*Expectation},
we also find that at the (left) mid-point $m$ of our process we have the nice relations
\begin{equation}\label{eq:ExpS4}
E[S^2_m]=m^2/(2m-1)\quad \text{and } E[S_m^4]=\frac{3m^4-4m^3}{4m^2-8m+3}.
\end{equation}
These give us a rational formula for $E[\widetilde{M}_m^2]$, but for the moment, we just use
the partial simplification
\begin{align*}
E[\widetilde{M}_m^2]&=E\left|\frac{(2m-1)S_{m}^2-(2m-m)m}{(2m-m)(2m-m-1)}\right|^2\\
&= \frac{1}{m^2(m-1)^2}\left[(2m-1)^2ES_{m}^4-m^4\right].
\end{align*}
For $1\leq k \leq m$ we have the trivial bound
$$
1\leq \max_{1\leq k\leq m}\left|\frac{(2m)(2(m-1))}{(2m-k)(2m-k-1)}\right|^2,
$$
so the $L^2$ maximal inequality
applied to the martingale \eqref{eq:DBMart} gives us
\begin{equation}\label{eq:ForBoth}
E\max_{1\leq k\leq m}\left|(2m-1)S_{k}^2-(2m-k)k\right|^2 \leq 64\left[(2m-1)^2ES_{m}^4-m^4\right].
\end{equation}

We can now take advantage of a symmetry that is special to the discrete bridge. Since  $S_{2m}=0$ we have
$(X_1+\cdots+X_k)^2=(X_{2m}+\cdots+X_{k+1})^2$, and by exchangeability the vectors
$(X_1,\dots,X_m)$ and $(X_{2m},\dots,X_{m+1})$ have the same distribution.  Also, the
value $(2m-k)k$ is ``invariant" in the following sense: if we substitute for $k$ (the number of summands in $X_1+X_2+\cdots+X_k$)
the value $2m-k$ (the number of summands in $X_{2m}+X_{2m-1}+...+X_{k+1}$), then  the symmetric quantity $(2m-k)k$ is unchanged.
As a consequence, the random variables
$$
\max_{1\leq k\leq m}\left|(2m-1)S_{k}^2-(2m-k)k\right|^2 \quad \text{and} \quad
\max_{m\leq k\leq 2m-1}\left|(2m-1)S_{k}^2-(2m-k)k\right|^2
$$
are equal in distribution.

We can then apply \eqref{eq:ForBoth} twice to obtain
\begin{align*}
E\max_{1\leq k\leq 2m-1}\left|(2m-1)S_{k}^2-(2m-k)k\right|^2\leq 128\left[(2m-1)^2ES_{m}^4-m^4\right].
\end{align*}
From this bound and the formula \eqref{eq:ExpS4} for $ES_m^4$, we then have
\begin{align*}
E\max_{1\leq k\leq 2m-1}\left|S_{k}^2-k\frac{2m-k}{2m-1}\right|^2\leq 128\frac{4m^3-8m^2+4m}{8m^3-20m^2+14m-3}m^2,
\end{align*}
and, for any $m\geq 2$  we have $(4m^3-8m^2+4m)/(8m^3-20m^2+14m-3)<1$. In the end, we have the following proposition.

\begin{proposition}[Quadratic Permutation Inequality for Discrete Bridge]\label{Now*done*proposition}
For the set
${\mathcal X}=\{x_1,x_2, \ldots, x_{2m}\}$
with $x_i=1$ for $1 \leq i \leq m$ and $x_i=-1$ for $m+1 \leq i \leq 2m$ one has
\begin{align}\label{Now*Done}
\frac{1}{(2m)!}\sum_{\sigma}\max_{1\leq k\leq 2m-1}\left|\left(\sum_{i=1}^kx_{\sigma(i)}\right)^2
- k\frac{2m-k}{2m-1}\right|^2\leq 128 m^2,
\end{align}
where the first sum is taken over all possible permutations.
\end{proposition}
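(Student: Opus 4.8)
The plan is to specialize the martingale $\widetilde M_k$ of \eqref{eq:FirstNewMartingale} to this sign sequence, apply the $L^2$ maximal inequality over the first half of the range, and then use the reflection symmetry of the discrete bridge to fold the estimate onto the second half; the proposition is then just the restatement of the resulting probabilistic bound in permutation language. Since $x_i\in\{+1,-1\}$ forces $T_k\equiv k$ and $B=Q=2m$, the martingale collapses to $\widetilde M_k=\frac{(2m-1)S_k^2-(2m-k)k}{(2m-k)(2m-k-1)}$ of \eqref{eq:DBMart}, and the quantity inside the maximum in \eqref{Now*Done} is exactly $(2m-1)^{-2}$ times $|(2m-1)S_k^2-(2m-k)k|^2$.

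To control the Doob--Kolmogorov bounding term over $1\le k\le m$, I would use Lemma~\ref{Moments} with $B=Q=2m$, which together with \eqref{S_2*Expectation} yields the midpoint moments $E[S_m^2]=m^2/(2m-1)$ and $E[S_m^4]=(3m^4-4m^3)/(4m^2-8m+3)$ recorded in \eqref{eq:ExpS4}, and hence a closed form for $4E[\widetilde M_m^2]$. Since the deterministic weight $(2m-k)(2m-k-1)$ is bounded above by $4m(m-1)$ for $1\le k\le m$, the $L^2$ maximal inequality applied to $\{\widetilde M_k\}$ converts this into the half-range estimate $E\max_{1\le k\le m}|(2m-1)S_k^2-(2m-k)k|^2\le 64[(2m-1)^2E[S_m^4]-m^4]$ of \eqref{eq:ForBoth}.

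The crux is the folding. Because $S_{2m}=0$ we have $(X_1+\cdots+X_k)^2=(X_{2m}+\cdots+X_{k+1})^2$, and by exchangeability $(X_1,\dots,X_m)$ and $(X_{2m},\dots,X_{m+1})$ have the same law; moreover the weight $(2m-k)k$ is invariant under the substitution $k\mapsto 2m-k$, so $\max_{1\le k\le m}|(2m-1)S_k^2-(2m-k)k|^2$ and $\max_{m\le k\le 2m-1}|(2m-1)S_k^2-(2m-k)k|^2$ are identically distributed. Splitting the range $1\le k\le 2m-1$ at $m$ and applying \eqref{eq:ForBoth} to each half doubles the constant to $128$; dividing through by $(2m-1)^2$, inserting the value of $E[S_m^4]$, and using the elementary bound $(4m^3-8m^2+4m)/(8m^3-20m^2+14m-3)<1$ for $m\ge2$ (the case $m=1$ being trivial) collapses the right-hand side to $128m^2$, giving $E\max_{1\le k\le 2m-1}|S_k^2-k(2m-k)/(2m-1)|^2\le 128m^2$.

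Finally I would pass to permutations through the sampling model: for the fixed multiset $\mathcal X$ the vector $(X_1,\dots,X_{2m})$ is uniform over the $(2m)!$ orderings $(x_{\sigma(1)},\dots,x_{\sigma(2m)})$, so the expectation above is exactly the average over $\sigma$ on the left of \eqref{Now*Done}, completing the proof. I expect the only real subtlety to be the folding step --- matching the time-reversal identity with the invariance of $(2m-k)k$ so that the \emph{same} bound \eqref{eq:ForBoth} applies verbatim to the upper half of the range --- together with the routine bookkeeping needed to keep the constant $128$ through the two crude estimates; the moment computations are supplied wholesale by Lemma~\ref{Moments}.
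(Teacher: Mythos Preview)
Your proposal is correct and follows essentially the same approach as the paper: specialize $\widetilde M_k$ using $T_k\equiv k$, bound the denominator by $4m(m-1)$ on $1\le k\le m$ to convert the $L^2$ maximal inequality into \eqref{eq:ForBoth}, fold via exchangeability and the invariance of $(2m-k)k$ under $k\mapsto 2m-k$, and finish with the rational inequality for $m\ge 2$. The only addition is your explicit remark that $m=1$ is trivial, which the paper leaves implicit.
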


For the terms of the squared discrete bridge process $\{S_k^2: 0\leq i \leq 2m\}$ have the expectations
$E S_{k}^2=k(2m-k)/(2m-1)$, so \eqref{Now*Done} gives us a rigorous
bound on the maximum deviation of the square $S_k^2$ of a discrete bridge from its expected value $ES_k^2$ .
Here, the order $O(m^2)$ of the bound cannot be improved, but, if necessity called, one may be able to improve on
ungainly constant $128$.

\section{Permuted Sums with Fixed Weights}\label{sec:AlternaitingSums}

In Section \ref{sec:SecondConstruction}, we considered the weighted sums
$$
W_k=a_1X_1+a_2X_2+\cdots+a_kX_k \quad \text{for }1\leq k\leq n.
$$
and we found that the process
\begin{equation}\label{eq:WeightedMartingale2}
M_k= W_k+\frac{a_1+a_2+ \cdots +a_k}{n-k}S_k \quad \quad 1\leq k < n
\end{equation}
is a martingale whenever the multipliers are non-anticipating random variables  (i.e. whenever $a_k$ is $\F_{k-1}$-measurable
for each $1\leq k \leq n$). Here we will show that this martingale has informative uses even when one simply takes the
multipliers to be fixed real numbers.

First we introduce some shorthand. For $1 \leq k \leq n$ we will write
\begin{equation}\label{eq:alphaNotation}
\alpha_1(k)=a_1+a_2+\cdots+a_k \quad \text{and} \quad \alpha_2(k)=a^2_1+a^2_2+\cdots+a^k_k.
\end{equation}
Also, we will be most interested in multiplier vectors $a=(a_1, a_2, \ldots, a_n)$ for which we have
some control of the quantity
\begin{equation}\label{eq:Vcondition}
V_n(a)\stackrel{\rm def}{=}\max_{1\leq k\leq n-1}\alpha_1^2(k)\big/\alpha_2(n),
\end{equation}
which is a measure of cancelation among the multipliers. A leading example worth keeping in mind is
the sequence $a_k=(-1)^{k+1}$, $1 \leq k \leq n$, for which we have $V_n(a)=1/n$.
We will revisit the measure $V_n(a)$ after we derive a moment bound.

\begin{lemma}\label{lm:Msqrd} For the martingale $\{M_k\}$ defined by equation \eqref{eq:WeightedMartingale2}, we have
\begin{align}\label{eq:Msqrd}
E[M_k^2]= \frac{1}{n-1} \alpha_2(k) B
           +\frac{1}{(n-1)(n-k)} \alpha_1^2(k) B.
\end{align}
\end{lemma}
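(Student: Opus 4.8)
The plan is to obtain \eqref{eq:Msqrd} by direct expansion, since for fixed multipliers $a_1,\dots,a_n$ every term that appears is a second moment of the sample drawn without replacement, and under the standing assumption $M=S_n=0$ these are all available in closed form. First I would record the two elementary moments: $E[X_i^2]=B/n$, and, from $0=E[S_n^2]=nE[X_1^2]+n(n-1)E[X_1X_2]$, also $E[X_iX_j]=-B/(n(n-1))$ whenever $i\ne j$. I would also import $E[S_k^2]=k(n-k)B/(n(n-1))$ from \eqref{S_2*Expectation}.

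Next, writing $M_k=W_k+(n-k)^{-1}\alpha_1(k)S_k$ and squaring, one has
$$
M_k^2=W_k^2+\frac{2\alpha_1(k)}{n-k}\,W_kS_k+\frac{\alpha_1^2(k)}{(n-k)^2}\,S_k^2,
$$
so it suffices to evaluate $E[W_k^2]$ and $E[W_kS_k]$. For the first, $E[W_k^2]=\sum_{i\le k}a_i^2\,E[X_i^2]+\sum_{i\ne j\le k}a_ia_j\,E[X_iX_j]$; using $\sum_{i\ne j\le k}a_ia_j=\alpha_1^2(k)-\alpha_2(k)$ together with the moments above, the terms collect to $E[W_k^2]=\alpha_2(k)B/(n-1)-\alpha_1^2(k)B/(n(n-1))$. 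For the cross term, $E[W_kS_k]=\sum_{i\le k}a_i\,E[X_i^2]+\sum_{i\ne j\le k}a_i\,E[X_iX_j]$; since $\sum_{i\ne j\le k}a_i=(k-1)\alpha_1(k)$, this simplifies to $E[W_kS_k]=\alpha_1(k)(n-k)B/(n(n-1))$.

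Finally I would substitute these expressions together with $E[S_k^2]$ into the displayed expansion of $M_k^2$. The cross term then contributes $2\alpha_1^2(k)B/(n(n-1))$ and the $S_k^2$ term contributes $k\alpha_1^2(k)B/(n(n-1)(n-k))$, so the coefficient of $\alpha_1^2(k)B$ collapses via $-1/n+2/n+k/(n(n-k))=1/(n-k)$, leaving exactly $\alpha_2(k)B/(n-1)+\alpha_1^2(k)B/((n-1)(n-k))$, as claimed. There is no genuine obstacle here; the only points requiring care are the two off-diagonal combinatorial identities and the final cancellation. One could instead invoke orthogonality of martingale increments, $E[M_k^2]=\sum_{j=1}^{k}E[(M_j-M_{j-1})^2]$, but evaluating $E[(M_j-M_{j-1})^2\mid\F_{j-1}]$ is no simpler than the direct route, so I would not pursue it.
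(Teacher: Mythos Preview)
Your proof is correct and follows essentially the same approach as the paper: expand $M_k^2$ into $W_k^2$, $W_kS_k$, and $S_k^2$, compute each expectation using $E[X_i^2]=B/n$, $E[X_iX_j]=-B/(n(n-1))$, and \eqref{S_2*Expectation}, and collect terms. The only addition is that you carry out the final combination explicitly, whereas the paper just says ``summing up the terms completes the proof.''
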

\begin{proof}
Simply squaring \eqref{eq:WeightedMartingale2}, we have
$$
|M_k|^2=W_k^2+\frac{\alpha_1^2(k)}{(n-k)^2}S_k^2+2\frac{\alpha_1(k)}{n-k}W_kS_k,
$$
so we just need to find $EW_k^2$, $ES_k^2$ and $EW_kS_k$. We know $ES_k^2$ from (\ref{S_2*Expectation}),
and we have  $EX_i^2=B/n$ and $EX_iX_j=-B/(n(n-1))$ so
\begin{align*}
EW_k^2=&E\Big[\sum_{i=1}^{k}a_i^2X_i^2+\sum_{1\leq{i,j}\leq k, i\neq j}a_ia_jX_iX_j\Big]\\
      =&\alpha_2(k)\frac{B}{n}+\left[\alpha_1^2(k)-\alpha_2(k)\right]\left[-\frac{B}{n(n-1)}\right]
      =\alpha_2(k)\frac{B}{n-1}-\alpha_1^2(k)\left[\frac{B}{n(n-1)}\right].
\end{align*}
Similarly, we have
\begin{align*}
EW_kS_k=&E\Big[\sum_{i=1}^{k}a_iX_i^2+\sum_{i=1}^ka_i\sum_{1\leq{j}\leq k, j\neq i}X_iX_j\Big]\\
      =&\alpha_1(k)\frac{B}{n}-(k-1)\alpha_1(k)\left[\frac{B}{n(n-1)}\right]
      =\alpha_1(k)\frac{(n-k)B}{n(n-1)},
\end{align*}
so summing up the terms completes the proof of the lemma.
\end{proof}

From this lemma, the $L^2$ maximal inequality gives us
\begin{align}\label{eq:weightedMax}
E\max_{1\leq k\leq n-1} M_{k}^2 \leq & 4E[M_{n-1}]^2 =4 [\alpha_2(n-1) +\alpha_1^2(n-1)]\frac{B}{n-1},
\end{align}
but to extract real value from we need to relate it to the weighted sum $W_k$.
Using \eqref{eq:WeightedMartingale2} we can write $W_k$ as the difference between
$M_k$ and $\alpha_1(k){S_k}/({n-k})$, so from the trivial bound $(x+y)^2\leq 2x^2+2y^2$  we have
\begin{align*}
\max_{1\leq k\leq n-1}|W_k|^2&\leq 2\max_{1\leq k\leq n-1}|M_k|^2+2 \max_{1\leq k\leq n-1}\alpha_1^2(k)\left|\frac{S_k}{n-k}\right|^2\\
                             &\leq 2\max_{1\leq k\leq n-1}|M_k|^2+2\max_{1\leq k\leq n-1}\alpha_1^2(k)
                             \cdot\max_{1\leq k\leq n-1}\left|\frac{S_k}{n-k}\right|^2.
\end{align*}
The second step may look wasteful, but now we can apply both \eqref{eq:weightedMax}
and the Max-Averages inequality~(\ref{easy*Garsia})
from Section \ref{sec:firstRAI} to obtain
\begin{align*}
E\max_{1\leq k\leq n-1}|W_k|^2\leq
8\Big[\alpha_2(n-1)+\alpha_1^2(n-1) \Big]\frac{B}{n-1}
+8\max_{1\leq k\leq n-1} \alpha_1^2(k) \frac{B}{n}.
\end{align*}
Note that we also have
\begin{align*}
\max_{1\leq k\leq n}|W_k|^2&=\max\left(\max_{1\leq k\leq n-1}\left|W_k\right|^2, (W_{n-1}+a_nX_n)^2\right)\\
                             &\leq\max\left(\max_{1\leq k\leq n-1}\left|W_k\right|^2, 2W_{n-1}^2+2a_n^2X_n^2\right)\\
                             &\leq2\max_{1\leq k\leq n-1}\left|W_k\right|^2+2a_n^2X_n^2,
\end{align*}
and the bottom line is that
\begin{equation}\label{eq:quantitytobound}
E\left\{\max_{1\leq k\leq n}|W_k|^2\right\} =\frac{1}{n!}\sum_{\sigma}\max_{1\leq k\leq n}\left|\sum_{i=1}^k a_i x_{\sigma(i)}\right|^2
\end{equation}
is bounded by the lengthy (but perfectly tractable) sum
\begin{equation}\label{weighted*Garsia}
16\Big[\alpha_2(n-1)+\alpha_1^2(n-1)\Big]\frac{B}{n-1}
+16\max_{1\leq k\leq n-1}\alpha_1^2(k)\frac{B}{n}
+2a_n^2\frac{B}{n}.
\end{equation}

To make this concrete, note that for $a_k=(-1)^{k+1}$ the bound on  \eqref{eq:quantitytobound} that we get from \eqref{weighted*Garsia}
is simply $(16n/(n-1)+18/n)B$. The ratio $16n/(n-1)+18/n$ decreases to $16$, and  for $n=18$ the upper bound is equal to $(17+{16}/{17})B$.
By Cauchy's inequality, we have $|W_k|^2\leq k B$ for all $k$, so the bound  $(17+{16}/{17})B$ also holds for all $n\leq 17$.
When we assemble the pieces, we have a permutation maximal inequality for sums with alternating signs.

\begin{proposition}\label{Garsia*alternating*sums*proposition}
For real numbers  $\{x_1,x_2,...,x_n\}$ with $x_1+x_2+\cdots+x_n=0$ we have
\begin{equation}\label{Garsia*alternating*sums}
\frac{1}{n!}\sum_{\sigma}\max_{1\leq k\leq n}|\sum_{i=1}^k(-1)^ix_{\sigma(i)}|^2
\leq \left(17+\frac{16}{17}\right)\sum_{i=1}^nx_i^2
\end{equation}
where the sum is taken over all permutations of $\{x_1,x_2,...,x_n\}$.
\end{proposition}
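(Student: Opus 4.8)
The plan is to read this proposition directly off the machinery already assembled in this section, specialized to the alternating multiplier sequence $a_k=(-1)^{k+1}$, $1\le k\le n$. The workhorse is \eqref{weighted*Garsia}, which bounds, for \emph{any} fixed real multipliers, the quantity $E\{\max_{1\le k\le n}|W_k|^2\}$ that \eqref{eq:quantitytobound} identifies with the averaged maximal sum $(n!)^{-1}\sum_\sigma\max_{1\le k\le n}\bigl|\sum_{i=1}^k a_i x_{\sigma(i)}\bigr|^2$. So beyond evaluating the three $\alpha$-quantities occurring in \eqref{weighted*Garsia} for the alternating weights and then tidying the constant, nothing new is required. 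I would first observe that squaring makes the proposition's sign convention $(-1)^i$ interchangeable with the construction's convention $(-1)^{i+1}$, so there is no loss in taking $a_k=(-1)^{k+1}$.

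Next I would compute, for $a_k=(-1)^{k+1}$, that $\alpha_1(k)=\sum_{i=1}^k(-1)^{i+1}$ equals $1$ for $k$ odd and $0$ for $k$ even, so $\alpha_1^2(k)\le 1$ for every $k$ (in particular $\alpha_1^2(n-1)\le 1$ and $\max_{1\le k\le n-1}\alpha_1^2(k)\le 1$), while $\alpha_2(k)=\sum_{i=1}^k 1=k$ and $a_n^2=1$. Substituting $\alpha_2(n-1)+\alpha_1^2(n-1)\le n$, $\max_k\alpha_1^2(k)\le 1$, and $a_n^2=1$ into \eqref{weighted*Garsia} collapses that three-term bound to
\[
\frac{16n}{n-1}\,B+\frac{18}{n}\,B=\Bigl(\frac{16n}{n-1}+\frac{18}{n}\Bigr)B .
\]
The rest is arithmetic: the coefficient $g(n)=16n/(n-1)+18/n$ is strictly decreasing in $n$ (its derivative is $-16/(n-1)^2-18/n^2<0$) and tends to $16$, so for every $n\ge 18$ one has $g(n)\le g(18)=288/17+1=17+16/17$. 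This settles the inequality for all $n\ge 18$.

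For the finitely many small cases $n\le 17$ I would fall back on Cauchy's inequality: $|W_k|^2=\bigl|\sum_{i=1}^k(-1)^{i+1}x_{\sigma(i)}\bigr|^2\le k\sum_{i=1}^k x_{\sigma(i)}^2\le kB\le nB\le 17B$, and since $17<17+16/17$ the stated bound holds in these cases as well (averaging over $\sigma$ preserves it). Combining the two ranges proves the proposition. There is no genuine obstacle here — the heavy lifting lies in establishing \eqref{weighted*Garsia} via Lemma~\ref{lm:Msqrd}, the $L^2$ maximal inequality, and the Max-Averages inequality \eqref{easy*Garsia} — so the only points needing a moment's care are the harmless sign bookkeeping and the choice of threshold $n=18$, which is exactly what is needed to land on the clean constant $17+16/17$ rather than a slightly larger one.
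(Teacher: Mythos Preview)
Your proposal is correct and follows essentially the same route as the paper: specialize the general bound \eqref{weighted*Garsia} to $a_k=(-1)^{k+1}$, obtain the coefficient $16n/(n-1)+18/n$, observe it is decreasing with value $17+16/17$ at $n=18$, and handle $n\le 17$ by the trivial Cauchy bound $|W_k|^2\le kB$. The only cosmetic differences are that you make the monotonicity explicit via a derivative and you phrase $\alpha_2(n-1)+\alpha_1^2(n-1)\le n$ as an inequality (the paper tacitly uses the worst-case equality), neither of which affects the argument.
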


The argument that leads to  \eqref{Garsia*alternating*sums} is useful for more than just alternating sums;
it has bite whenever there is meaningful cancelation in $a=(a_1, a_2, \ldots, a_n)$. Specifically,
the bound \eqref{weighted*Garsia}  is always dominated by
$$16\alpha_2(n)+32 \!\!\! \max_{1\leq k\leq n-1}\alpha_1^2(k),$$
so our proof also gives us more general --- and potentially more applicable --- bound.

\begin{proposition}\label{Garsia*alternating*sums*proposition*two}
For sets of real numbers $\{a_1,a_2,...,a_n\}$ and  $\{x_1,x_2,...,x_n\}$ such that $x_1+x_2+\cdots+x_n=0$ we have
\begin{equation}\label{eq:GwithV}
\frac{1}{n!}\sum_{\sigma}\max_{1\leq k\leq n}|\sum_{i=1}^ka_ix_{\sigma(i)}|^2
\leq \frac{16}{n-1}\{1+2V_n(a)\} \sum_{i=1}^na_i^2\sum_{i=1}^nx_i^2,
\end{equation}
where the sum is taken over all permutations of $\{x_1,x_2,...,x_n\}$ and  where $V_n(a)$ defined in \eqref{eq:Vcondition}.
\end{proposition}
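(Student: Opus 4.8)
The plan is to reduce Proposition~\ref{Garsia*alternating*sums*proposition*two} to the already-established bound \eqref{weighted*Garsia} on the quantity \eqref{eq:quantitytobound}. The observation the authors flag is that the three-term expression in \eqref{weighted*Garsia} is dominated by the cleaner expression $16\alpha_2(n)+32\max_{1\leq k\leq n-1}\alpha_1^2(k)$. So the first step is to verify this domination term by term. We have $\alpha_2(n-1)\leq\alpha_2(n)$ trivially, and $2a_n^2B/n\leq 2\alpha_2(n)B/n\leq 2\alpha_2(n)B/(n-1)$, so together with the first summand these contribute at most $16[\alpha_2(n-1)+\alpha_1^2(n-1)]B/(n-1)+2\alpha_2(n)B/(n-1)\leq 18\alpha_2(n)B/(n-1)+16\alpha_1^2(n-1)B/(n-1)$; meanwhile $\alpha_1^2(n-1)\leq\max_{1\leq k\leq n-1}\alpha_1^2(k)$, so the $\alpha_1^2(n-1)$ part is absorbed into the $\max$ term, and the middle summand $16\max_{1\leq k\leq n-1}\alpha_1^2(k)B/n\leq 16\max_{1\leq k\leq n-1}\alpha_1^2(k)B/(n-1)$ likewise. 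Collecting, \eqref{weighted*Garsia} is at most $\bigl(18\alpha_2(n)+32\max_{1\leq k\leq n-1}\alpha_1^2(k)\bigr)B/(n-1)$; since $18\leq 16\cdot(1+\text{something})$ is not quite what we want, I would instead be slightly more careful and note $\alpha_2(n-1)+\text{(}a_n^2\text{ term, now scaled by }(n-1)/n\text{)}\leq \alpha_2(n)$ exactly once one writes $2a_n^2B/n = 2a_n^2 B/n$ against the spare $16a_n^2\le 16\alpha_2(n)-16\alpha_2(n-1)$ headroom — in other words bound $16\alpha_2(n-1)B/(n-1)+2a_n^2B/n\leq 16\alpha_2(n)B/(n-1)$, which holds since $2a_n^2/n\leq 16a_n^2/(n-1)$ for $n\geq 2$. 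That yields exactly $16\alpha_2(n)B/(n-1)+32\max_{1\leq k\leq n-1}\alpha_1^2(k)B/(n-1)$, as claimed.

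The second step is purely cosmetic: rewrite $16\alpha_2(n)B/(n-1)+32\max_{1\leq k\leq n-1}\alpha_1^2(k)B/(n-1)$ in the form on the right-hand side of \eqref{eq:GwithV}. Since $\alpha_2(n)=\sum_{i=1}^n a_i^2$ and, by the definition \eqref{eq:Vcondition}, $\max_{1\leq k\leq n-1}\alpha_1^2(k)=V_n(a)\alpha_2(n)=V_n(a)\sum_{i=1}^n a_i^2$, the bound becomes $\frac{16}{n-1}\sum_{i=1}^n a_i^2\cdot B+\frac{32}{n-1}V_n(a)\sum_{i=1}^n a_i^2\cdot B=\frac{16}{n-1}\{1+2V_n(a)\}\bigl(\sum_{i=1}^n a_i^2\bigr)B$, and recalling $B=\sum_{i=1}^n x_i^2$ this is exactly the right-hand side of \eqref{eq:GwithV}. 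Combining with the identification \eqref{eq:quantitytobound} of the left-hand side as $\frac{1}{n!}\sum_\sigma\max_{1\leq k\leq n}|\sum_{i=1}^k a_i x_{\sigma(i)}|^2$ finishes the proof.

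I do not expect any real obstacle here: the entire content is already carried by Lemma~\ref{lm:Msqrd}, the $L^2$ maximal inequality \eqref{eq:weightedMax}, the Max-Averages inequality \eqref{easy*Garsia}, and the splitting argument that produced \eqref{weighted*Garsia}; all that remains is the elementary term-by-term majorization and the substitution of the definition of $V_n(a)$. The one place to be slightly attentive is the handling of the lone $2a_n^2 B/n$ term and the index discrepancy between $\alpha_2(n-1)$, $\alpha_1^2(n-1)$ and their $n$-indexed counterparts, but as indicated above these are absorbed with room to spare for $n\geq 2$. If one wanted the statement for $n=1$ it is vacuous (the centering forces $x_1=0$), so no separate small-$n$ check is needed.
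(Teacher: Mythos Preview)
Your proposal is correct and follows exactly the route the paper takes: the paper's entire argument for Proposition~\ref{Garsia*alternating*sums*proposition*two} is the one-line remark that the expression \eqref{weighted*Garsia} is dominated by $\bigl[16\alpha_2(n)+32\max_{1\leq k\leq n-1}\alpha_1^2(k)\bigr]B/(n-1)$, and you have simply supplied the term-by-term verification (including the small observation that $2a_n^2/n\leq 16a_n^2/(n-1)$ lets the stray $2a_n^2B/n$ be absorbed into the headroom $16\alpha_2(n)-16\alpha_2(n-1)$) and then substituted the definition of $V_n(a)$. The only cosmetic suggestion is to trim the false start in your first paragraph and go straight to the clean grouping you eventually settle on.
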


This inequality shows that there are concrete benefits to introducing $V(a)$.
For example, by a sustained and subtle argument,
Garsia (1970, 3.7.20, p.~92) arrived at a version of our inequality where the
coefficient $16 \{1+2V_n(a)\}$ is replaced by ${80}$. Now, for
uniform multipliers, one has $V_n(a)=O(n)$ and Garsia's
inequality is greatly superior to our bound \eqref{eq:GwithV}. On the other hand,
for multipliers that satisfy the cancelation property $V_n(a)\to 0$, the present bound eventually meets and beats Garsia's bound.
In particular, for multipliers given by alternating signs, the constant of \eqref{eq:GwithV} is just
$17+16/17$.

\section{Weighted Sums and Folding}

By many accounts, the permutation maximal inequality of Garsia (1970, p.~86) is the salient result in the theory of permutation
inequalities, so it is a natural challenge to see if it can be proved by the robust martingale methods
that follow from our martingale constructions. We give a proof of this kind --- without appeal to $V_n(a)$.
Once the proof is complete, we address the
differences between Garsia's inequality and the present bound with its curious constant.

\begin{proposition}\label{Garsia*Inequality*Weighted*Sums}
For real numbers $\{a_1,a_2,...,a_n\}$ and real numbers $\{x_1,x_2,...,x_n\}$ such that  $x_1+x_2+\cdots+x_n=0$ we have
\begin{equation}\label{eq:permWweights}
\frac{1}{n!}\sum_{\sigma}\max_{1\leq k\leq n}\big|\sum_{i=1}^ka_ix_{\sigma(i)}\big|^2
\leq
\big(80+\frac{4}{205}\big){\sum_{i=1}^na_i^2\sum_{i=1}^nx_i^2}\big/(n-1),
\end{equation}
where the sum is taken over all possible permutations of $\{x_1,x_2,...,x_n\}$.
\end{proposition}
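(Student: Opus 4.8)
The goal is to prove the Garsia-type bound \eqref{eq:permWweights} with an explicit constant of the form $80 + \varepsilon$, using only the martingale machinery already in hand. The natural starting point is the bound \eqref{weighted*Garsia}, which controls $E\{\max_{1\le k\le n}|W_k|^2\}$ in terms of $\alpha_1, \alpha_2$ and $B$. The deficiency of the inequality \eqref{eq:GwithV} derived from it is the appearance of $\max_{1\le k\le n-1}\alpha_1^2(k)$, which for generic (non-cancelling) multipliers can be as large as $\alpha_2(n)\cdot O(n)$; this is exactly where $V_n(a)$ enters, and it is also exactly what we must avoid. So the first and central idea is to replace the Max-Averages inequality \eqref{easy*Garsia} — which pays the price $\max_k\alpha_1^2(k)\cdot B/n$ — by a \emph{folded} estimate that keeps the $\alpha$-dependence in the more benign form $\alpha_2(n)$ only.

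First I would revisit the decomposition $W_k = M_k - \alpha_1(k)S_k/(n-k)$ and the splitting $\max_{1\le k\le n}|W_k|^2 \le 2\max_k|M_k|^2 + 2\max_k \alpha_1^2(k)|S_k/(n-k)|^2$, but now I would \emph{not} factor the second term as a product of two maxima. Instead I would apply the folding device of Section \ref{sec:foldingdevice}: split the range $1\le k\le n$ at a midpoint $m\approx n/2$, bound the contribution on $\{1\le k\le m\}$ and on $\{m<k\le n\}$ separately, and on each half use that $\alpha_1^2(k)$ is controlled on that half together with the half-range bound \eqref{eq:FirstHalfFold} (or its reversed companion) for $S_k^2$. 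The key point is that on the left half $\max_{k\le m}\alpha_1^2(k)$ multiplies $E\max_{k\le m}S_k^2\le 4mB/(n-m)$, and symmetrically on the right half; adding the two and optimizing over $m$ produces a bound of the shape $cB\,\alpha_2(n)/(n-1)$ rather than $cB\,\alpha_2(n)\cdot n/(n-1)$. Combining this with the $M_k$-term bound $16[\alpha_2(n-1)+\alpha_1^2(n-1)]B/(n-1)$ from \eqref{eq:weightedMax} — where I would further fold or crudely bound $\alpha_1^2(n-1)\le (n-1)\alpha_2(n-1)$ only if it helps, and otherwise carry it as is — and with the boundary term $2a_n^2 B/n$, I expect to land on a constant near $80$.

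The last step is purely arithmetic: track all the numerical constants through the folding, choose $m$ as close to $n/2$ as possible, identify the worst small value of $n$ (as in \eqref{backward*Serfling} where $n=9$ was worst), and for the remaining small $n$ invoke Cauchy's inequality $|W_k|^2\le k\sum a_i^2\sum x_i^2\le n\sum a_i^2\sum x_i^2$, matching it against $(80+4/205)\sum a_i^2\sum x_i^2/(n-1)$ to see the bound holds trivially below the threshold. The peculiar constant $80+4/205$ is presumably the exact value that emerges from the worst-case $n$ in the folded estimate — one simply reads it off rather than trying to optimize it away.

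\textbf{Main obstacle.} The real work — and the place the argument could go wrong — is the bookkeeping in the folded estimate of the cross term $\max_k \alpha_1^2(k)|S_k/(n-k)|^2$: one must be careful that on the right half the relevant quantity after reverse sampling is $\alpha_1^2(k)$ evaluated at the \emph{original} (unreversed) indices, so the symmetry used for $S_k$ does not literally symmetrize the $\alpha$'s, and one has to bound $\max_{m<k\le n}\alpha_1^2(k)$ by $\alpha_2(n)\cdot(\text{something like }n-m)$ via Cauchy—Schwarz rather than by exploiting cancellation. Getting the constant down to $80$ (and not, say, $160$) will hinge on doing this splitting tightly and on the $2x^2+2y^2$ losses not compounding; that is the delicate part, whereas everything else is routine.
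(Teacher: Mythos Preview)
Your plan has a real gap, and you have in fact put your finger on it in the ``Main obstacle'' paragraph --- but the workaround you propose there is wrong, and the error propagates to the $M_k$-term as well.

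You want to fold only the cross term $\max_k \alpha_1^2(k)\,|S_k/(n-k)|^2$, and on the right half $\{m<k\le n-1\}$ you claim Cauchy--Schwarz gives $\max_{m<k\le n}\alpha_1^2(k)\le \alpha_2(n)\cdot(\text{something like }n-m)$. It does not: $\alpha_1(k)=a_1+\cdots+a_k$ has $k$ terms, so Cauchy gives $\alpha_1^2(k)\le k\,\alpha_2(n)$, and for $k$ close to $n$ this is $\approx n\,\alpha_2(n)$, not $(n-m)\,\alpha_2(n)$. To see the plan fail concretely, take $a_i\equiv 1$: then $\alpha_1^2(n-1)=(n-1)^2$ while $(n-m)\alpha_2(n)=(n-m)n\approx n^2/2$, so the inequality you assert is false. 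Consequently the right-half cross term $E\max_{m<k\le n-1}\alpha_1^2(k)|S_k/(n-k)|^2$ is of order $\alpha_2(n)B$, not $\alpha_2(n)B/(n-1)$, and the argument loses a full factor of $n$. The same defect hits your treatment of the $M_k$ term: the full-range bound \eqref{eq:weightedMax} carries $\alpha_1^2(n-1)$, which is again $O(n\,\alpha_2(n))$ in the worst case, so ``carrying it as is'' costs you a factor of $n$ there too.

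The fix --- and this is exactly what the paper does --- is to fold $W_k$ itself rather than the cross term. For $k>m$ write $W_k=W_m+(a_{m+1}X_{m+1}+\cdots+a_kX_k)$, so that
\[
\max_{1\le k\le n}W_k^2 \le 2\max_{1\le k\le m}W_k^2 + 2\max_{m<k\le n}\big|a_{m+1}X_{m+1}+\cdots+a_kX_k\big|^2 .
\]
The point is that the tail is a \emph{fresh} weighted sum whose partial $a$-sums start over at $m$; its Cauchy bound is $j\sum_{i>m}a_i^2\le (n-m)\alpha_2(n)$, which is the ``something like $n-m$'' you were hoping for. One first derives a half-range estimate $E\max_{1\le k\le m}W_k^2\le 8\big(1+\frac{m}{n-m}+\frac{m^2}{n(n-m)}\big)\alpha_2(n)B/(n-1)$ by applying Doob to both $\{M_k\}$ and $\{S_k/(n-k)\}$ only up to time $m$ (this also cures the $M_k$ problem, since $E M_m^2$ involves $\alpha_1^2(m)\le m\alpha_2(n)$). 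Exchangeability then gives the companion bound for the tail with $m\leftrightarrow n-m$, and summing with $m=\lfloor n/2\rfloor$ yields the constant $80$ for even $n$; the odd-$n$ worst case is $n=81$, which produces the curious $80+4/205$, with Cauchy's inequality disposing of $n\le 81$.
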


\begin{proof}
First, for all $1\leq k \leq n$, Cauchy's inequality gives us
$\alpha_1^2(k)\leq k\alpha_2(k)$. Trivially one has $\alpha_2(k) \leq \alpha_2(n)$, so by Lemma \ref{lm:Msqrd} we have the bound
\begin{equation}\label{Max*Ineqiality*for*Weighted*Martingale}
E[M_k]^2\leq\bigg(1+\frac{k}{n-k}\bigg)\frac{\alpha_2(n) B}{n-1} \quad \quad \text{for } 1\leq k \leq n-1.
\end{equation}
Now, just from the definition \eqref{eq:WeightedMartingale2} of $M_k$, we can write  $W_k$ as
a difference between $M_k$ and $\alpha_1(k){S_k}/({n-k})$.
We can then use the crude bound
$(x+y)^2\leq 2x^2+2y^2$ and Cauchy's inequality to get
\begin{align*}
\max_{1\leq k\leq m}|W_k|^2&
                             \leq 2\max_{1\leq k\leq m}|M_k|^2+\alpha_1^2(k)\left|\frac{S_k}{n-k}\right|^2\\
                             &\leq 2\max_{1\leq k\leq m}|M_k|^2+2\max_{1\leq k\leq m}\alpha_1^2(k)\left|\frac{S_k}{n-k}\right|^2\\
                             &\leq 2\max_{1\leq k\leq m}|M_k|^2+(2m) \alpha_2(n) \max_{1\leq k\leq m}\left|\frac{S_k}{n-k}\right|^2.
\end{align*}
Long ago, in \eqref{S_2*Expectation},  we calculated $ES_m^2$, so here we can apply the $L^2$ maximal inequality to both
martingales $\{S_k/(n-k)\}$ and $\{M_k\}$ to get the bound \color{black}
\begin{equation}\label{Intermediate*W_k^2*Inequality}
E\max_{1\leq k\leq m}W_k^2\leq 8\left(1+\frac{m}{n-m}+\frac{m^2}{n(n-m)}\right)\frac{\alpha_2(n)B}{n-1}.
\end{equation}
Such an inequality for $1\leq m< n$ suggests the possibility of folding. To pursue this we first note
\begin{align}
\max_{1\leq k\leq n} W_k^2&= \max\left[\max_{1\leq k\leq m}W_k^2,\, \, \max_{m< k\leq n}W_k^2\right] \notag\\
                           &= \max\left[\max_{1\leq k\leq m}W_k^2,\, \, \max_{m< k\leq n}|W_m+a_{m+1}X_{m+1}+\cdots+a_kX_k|^2\right] \notag\\
                           &\leq \max\left[\max_{1\leq k\leq m}W_k^2,\,\, \, \, 2W_m^2+2\max_{m< k\leq n}|a_{m+1}X_{m+1}+\cdots+a_kX_k|^2\right]\notag\\
                           &\leq 2\left[\max_{1\leq k\leq m} W_k^2+\,\, \max_{m< k\leq n}|a_{m+1}X_{m+1}+\cdots+a_kX_k|^2\right].\label{eq:both}
\end{align}
By exchangeability, one expects the second maximum has a bound like the one derived in
(\ref{Intermediate*W_k^2*Inequality}). To make this explicit, one simply needs to
replace $m$ by $n-m$ in the upper bound of (\ref{Intermediate*W_k^2*Inequality}). Doing so gives us the sister bound
\begin{equation*}
E\max_{m< k\leq n}|a_{m+1}X_{m+1}+\cdots+a_kX_k|^2\leq8\left(1+\frac{(n-m)}{m}+\frac{(n-m)^2}{nm}\right)\frac{\alpha_2(n)B}{n-1}.
\end{equation*}
By our bounds on the two addends of \eqref{eq:both}, we then have
$$
E\max_{1\leq k\leq n}W_k^2\leq 16\left(2+ \frac{m}{n-m}+\frac{n-m}{m}+\frac{m^2}{n(n-m)}+\frac{(n-m)^2}{nm}\right)\frac{\alpha_2(n)B}{n-1}.
$$

It only remain to take $m= \lfloor n/2 \rfloor$ and to attend honestly to the consequences. If $n$ is even,
the constant that multiplies $\alpha_2(n)B/(n-1)$
is exactly $80$. For odd $n$, the constant approaches $80$ from above as $n$ increases to infinity,
and for $n=81$ the constant is $80+4/205$.
Furthermore, Cauchy's inequality gives us
$$
W_k^2 \leq \alpha_2(k) B \leq \alpha_2(n) B \quad \text{for all } 1\leq k \leq n,
$$
and we have $\alpha_2(n) B\leq (80+4/205)\alpha_2(n)B/(n-1)$ for all $n\leq 81$. So, in the end, we come to
\eqref{eq:permWweights}, our permutation maximal inequality with general weights.
\end{proof}

We would greet this result with some fanfare except that Garsia (1970, 3.7.20, p.~92) gives this bound with the constant $80$.
Still, one needs to keep in mind that we have pursued this derivation only to illustrate the usefulness of the martingales that are
given by our general linear algebraic construction. Perhaps it is victory enough to come so close to a long-standing result that was
originally obtained by a delicate  problem specific, argument.

Compared to Garsia's argument, the proof of \eqref{eq:permWweights} is straightforward. It is also reasonably robust and potentially capable of
further development, even though there seems to be no room to improve the constant.
In spirit the proof is close to the elegant argument of Stout (1974, pp. 145--148)
for his version of the easier unweighted inequality (Proposition~\ref{Garsia*Inequality}).
In each instance, the heart of the matter is the
application of the maximal $L^2$ inequality to some martingale. Here we have the benefit of ready access to the
martingales \eqref{eq:MartingalsM2M3} and \eqref{eq:WeightedMartingale} that were served up to us by our general construction.

\section{Observations and Connections}

Our focus here is on methodology, and our primary aim
has been to demonstrate the usefulness of a linear algebraic method for constructing martingales from the basic materials
of sampling without replacement. Through our examples
we hope to have shown that the martingales given by our general construction have honest bite.
In particular, these martingales yield reasonably direct proofs of a variety of permutation inequalities --- both new ones and old ones.

Among our new inequalities, the simple Hardy-type inequality \eqref{eq:SerflingInequality} seems particularly
attractive. If we had to isolate a single open problem for attention, then our choice would be to determine if the
constant of inequality \eqref{eq:SerflingInequality} is best possible. This problem seems feasible, but one will not get any help
from the easy arguments that show that the corresponding constant in Hardy's inequality is best possible.

The other new inequality that seems noteworthy is the Garsia-type inequality \eqref{Garsia*alternating*sums*proposition*two}
where we introduce $V(a)$, the measure of multiplier cancelations. This inequality may be long-winded, but it isolates a
common situation where one can do substantially better than the classic Garsia bound \eqref{eq:permWweights}.
The quadratic permutation inequality (Proposition \ref{our*martingale})
and the discrete bridge inequality \eqref{Now*Done} are more specialized,
and they may have a hard time finding regular employment. Still, they are perfect for the right job, and they also illustrate the
diversity of the martingales that come from the general construction.

We have developed several results in theory of permutation
inequalities to test the effectiveness of the permutation martingales given by our construction, but
permutation inequalities have a charm of their own, and one could always hope to do more.
We have already mentioned the remarkable maximal inequalities of
Chobanyan (1994) and Chobanyan and Salehi (2001) that exploit combinatorial mapping arguments in addition to martingale arguments.
It would be interesting to see if our new martingales could help more in that context.
Finally, we did not touch on the important weak-type (or Levy-type) permutation inequalities such as those studied in Pruss (1998) and
Levental (2001), but it seems reasonable to expect that the martingales \eqref{eq:MartingalsM2M3} and \eqref{eq:WeightedMartingale}
could also be useful in the theory of weak-type inequalities.

\end{document}